\newtheorem{theorem}{Theorem}[section]
\newtheorem{corollary}[theorem]{Corollary}
\newtheorem{lemma}[theorem]{Lemma}
\newtheorem{proposition}[theorem]{Proposition}
\newtheorem{definition}[theorem]{Definition}
\newtheorem{remark}[theorem]{Remark}
\numberwithin{equation}{section}
\begin{document}
\title{The Redner--ben-Avraham--Kahng cluster system without growth condition on the kinetic coefficients} 
\author{Philippe Lauren\c{c}ot}
\address{Laboratoire de Math\'ematiques (LAMA) UMR~5127, Universit\'e Savoie Mont Blanc, CNRS, F--73000 Chamb\'ery, France}
\email{philippe.laurencot@univ-smb.fr}

\keywords{Redner--ben-Avraham--Kahng cluster system, mild solution, classical solution}
\subjclass{34A34 34A12 82C05}

\date{\today}

\begin{abstract}
Existence of global mild solutions to the infinite dimensional Redner--ben-Avraham--Kahng cluster system is shown without growth or structure condition on the kinetic coefficients, thereby extending previous results in the literature. The key idea is to exploit the dissipative features of the system to derive a control on the tails of the infinite sums involved in the reaction terms. Classical solutions are also constructed for a suitable class of kinetic coefficients and initial conditions.
\end{abstract}

\maketitle

%
%
\pagestyle{myheadings}
\markboth{\sc{Ph. Lauren\c cot}}{\sc{The Redner-ben-Avraham-Kahng cluster system without growth condition}}

\section{Introduction}

The aim of this note is to investigate the existence of global mild solutions to a one species annihilation cluster system introduced in \cite{RbAK1987, dCPS2012} and referred to as the `cluster eating' system. This model describes the evolution of a set of clusters, each cluster being characterized by a single parameter $i\in\mathbb{N}\setminus\{0\}$ accounting for the number of active sites it bears. Denoting clusters bearing $i$ active sites by $P_i$, $i\ge 1$, the dynamics is governed by pairwise encounters between incoming clusters $P_i$ and $P_j$ resulting in the annihilation reaction $P_i+P_j \to P_{|i-j|}$ with no product formed when $i=j$. The number density $f_i=f_i(t)\ge 0$ of clusters with $i$ active sites, $i\ge 1$, at time $t\ge 0$, then evolves according to
\begin{subequations}\label{rbak}
	\begin{align}
		\frac{\mathrm{d}f_i}{\mathrm{d}t} & = \sum_{j=1}^\infty a_{i+j,j} f_{i+j} f_j - \sum_{j=1}^\infty a_{i,j} f_i f_j\,, \qquad i\ge 1\,, \label{rbak1} \\
		f_i(0) & = f_i^{in}\,, \qquad i\ge 1\,, \label{rbak2}
	\end{align}
\end{subequations}
where $a_{i,j}$ denotes the reaction rate between incoming clusters with respective sizes $i\ge 1$ and $j\ge 1$ and satisfies
\begin{equation}
	a_{i,j} = a_{j,i}\ge 0\,, \qquad i,j\ge 1\,. \label{a}
\end{equation}
The cluster system~\eqref{rbak} predicting the dynamics of $f=(f_i)_{i\ge 1}$ is a countably infinite system of quadratic differential equations which are strongly coupled due to the infinite series involved in the reaction terms. This structure actually prevents the use of the classical theory of ordinary differential equations to study the well-posedness of~\eqref{rbak}. Nevertheless, the infinite system~\eqref{rbak} is, at least formally, the limit as $n\to\infty$ of the finite dimensional cluster system \cite{RbAK1987}
\begin{subequations}\label{rbakf}
	\begin{align}
		\frac{\mathrm{d}f_i^n}{\mathrm{d}t} & = \sum_{j=1}^{n-i} a_{i+j,j} f_{i+j}^n  f_j^n - \sum_{j=1}^n a_{i,j} f_i^n f_j^n\,, \qquad 1\le i\le n\,, \label{rbakfa} \\
		f_i^n(0) & = f_i^{in}\,, \qquad 1\le i\le n\,, \label{rbakfb}
	\end{align}
\end{subequations}
where $n\ge 3$. One may then expect to obtain solutions to~\eqref{rbak} as limits of solutions $f^n=(f_i^{n})_{1\le i \le n}$ as $n\to\infty$ and this approach has proved successful when the rate coefficients grow at most quadratically; that is, 
\begin{equation}
	\sup_{i,j\ge 1}\left\{ \frac{a_{i,j}}{ij} \right\} < \infty\,, \qquad i,j\ge 1\,.\label{ga}
\end{equation} 
In that case, the existence of classical solutions is shown in \cite{dCPS2012} for $f^{in}=(f_i^{in})_{i\ge 1}\in X_{1,+}$, where the Banach space $X_m$ is defined for $m\in\mathbb{R}$ by
\begin{equation*}
	X_m := \{ z=(z_i)_{i\ge 1}\ :\ \|z\|_m := \sum_{i=1}^\infty i^m |z_i|<\infty \}\,, 
\end{equation*}
and $X_{m,+}$ denotes its positive cone. Well-posedness in $X_{1,+}$ is also established in \cite{dCPS2012} under the stronger growth condition $\sup_{i,j\ge 1}\left\{ a_{i,j}/\sqrt{ij}\right\}<\infty$. The same approach is used in \cite{Ver2024} to construct mild solutions to~\eqref{rbak} when the rate coefficients have the following structure: there are a sequence $(r_i)_{i\ge 1}$ of positive real numbers, a family $(\alpha_{i,j})_{i,j\ge 1}$ of non-negative numbers, and $R>0$ such that 
\begin{equation}
	a_{i,j} = r_i r_j + \alpha_{i,j}\,, \quad r_i \ge R i\,, \quad i,j\ge 1\,, \qquad \sup_{i,j\ge 1}\left\{ \frac{\alpha_{i,j}}{r_i r_j} \right\}< \infty\,. \label{sa}
\end{equation}
Note that no growth condition on the sequence $(r_i)_{i\ge 1}$ is required in~\eqref{sa}, in contrast to~\eqref{ga}. 

We shall actually prove that, given $f^{in}\in X_{1,+}$, there is a global mild solution to~\eqref{rbak} under the sole non-negativity and symmetry assumption~\eqref{a} on the rate coefficients $(a_{i,j})_{i,j\ge 1}$. Before stating the existence result, let us first recall the definition of a mild solution to~\eqref{rbak} in $X_{1,+}$.

\begin{definition}\label{defms}
	Consider $f^{in} = (f_i^{in})_{i\ge 1}\in X_{1,+}$. A mild solution $f=(f_i)_{i\ge 1}$ to~\eqref{rbak} is a sequence of non-negative functions satisfying
	\begin{itemize}
		\item [(a1)] $f\in L^\infty((0,\infty),X_{1,+})$ with $f_i\in C([0,\infty))$ for all $i\ge 1$;
		\item [(a2)] for each $i\ge 1$ and $t>0$, 
		\begin{equation*}
			 \sum_{j=1}^\infty a_{i+j,j} f_{i+j} f_j \in L^1((0,t))\,, \quad \sum_{j=1}^\infty a_{i,j} f_i f_j \in L^1((0,t))\,;
		\end{equation*}
		\item [(a3)] for each $i\ge 1$ and $t>0$, 
		\begin{equation*}
			f_i(t) = f_i^{in} + \int_0^t  \sum_{j=1}^\infty a_{i+j,j} f_{i+j}(s) f_j(s)\ \mathrm{d}s - \int_0^t \sum_{j=1}^\infty a_{i,j} f_i(s) f_j(s)\ \mathrm{d}s\,.
		\end{equation*}
	\end{itemize}
\end{definition}

\begin{theorem}\label{thm1}
	Assume that the kinetic coefficients $(a_{i,j})_{i,j\ge 1}$ satisfy~\eqref{a} and consider $f^{in}\in X_{1,+}$. Then there is at least one mild solution $f$ to~\eqref{rbak} in the sense of \Cref{defms} which satisfies additionally 
	\begin{equation}
		\sum_{j=1}^\infty \sum_{k=1}^\infty \min\{j,k\} a_{j,k} f_j f_k \in L^1((0,\infty)) \label{dm1}
	\end{equation}
	and
	\begin{equation}
		\|f(t)\|_1 + \int_0^t \sum_{j=1}^\infty \sum_{k=1}^\infty \min\{j,k\} a_{j,k} f_j(s) f_k(s)\ \mathrm{d}s = \|f^{in}\|_1\,, \qquad t>0\,. \label{dm2}
	\end{equation}
	In addition, if there is a non-decreasing sequence $(A_i)_{i\ge 1}$ of positive real numbers with $A_1\ge 1$ such that the kinetic coefficients $(a_{i,j})_{i,j\ge 1}$ and the initial condition $f^{in}$ satisfy
	\begin{equation}
		0\le a_{i,j} = a_{j,i} \le A_i A_j\,, \qquad i,j\ge 1\,, \label{aa}
	\end{equation} 
	and
	\begin{equation}
		M_A(f^{in}) := \sum_{i=1}^\infty A_i f_i^{in} < \infty\,, \label{ab}
	\end{equation} 
	then the mild solution $f$ constructed above satisfies
	\begin{equation}
		\sum_{i=m}^\infty A_i f_i(t) \le \sum_{i=m}^\infty A_i f_i^{in}\,, \qquad t\ge 0\,, \quad m\ge 1\,. \label{dm2a}
	\end{equation}
\end{theorem}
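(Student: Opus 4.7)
\emph{Overall strategy.} The plan is the classical one: construct solutions $f^n$ to the finite-dimensional truncation~\eqref{rbakf}, derive $n$-independent a priori estimates, and pass to the limit $n\to\infty$. The novelty, since no growth condition is imposed on $(a_{i,j})$, is that all control on the infinite reaction sums must be extracted purely from the dissipative structure of~\eqref{rbak} itself, as already emphasized in the abstract.

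\emph{Step~1: truncated dynamics and dissipation identity.} Standard ODE theory yields a unique non-negative global solution $f^n\in C^1([0,\infty);\RR^n)$ to~\eqref{rbakf}. Multiplying~\eqref{rbakfa} by $i$, summing over $1\le i\le n$, performing the change of index $k=i+j$ in the gain term, symmetrizing the loss term via $a_{i,j}=a_{j,i}$, and using $|j-k|-(j+k)=-2\min\{j,k\}$, I would obtain the dissipation identity
\begin{equation*}
\sum_{i=1}^n i f_i^n(t) + \int_0^t \sum_{j=1}^n\sum_{k=1}^n \min\{j,k\}\, a_{j,k}\, f_j^n(s) f_k^n(s)\,\mathrm{d}s = \sum_{i=1}^n i f_i^{in},
\end{equation*}
from which stem the $n$-uniform bounds $\|f^n\|_1\le \|f^{in}\|_1$, the pointwise bound $f_i^n\le \|f^{in}\|_1/i$, and an $L^1((0,\infty))$-bound on the dissipation sum.

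\emph{Step~2: compactness and passage to the limit.} This is where the main obstacle lies. Since $\min\{i+j,j\}=j$, the dissipation identity provides the crucial tail estimate
\begin{equation*}
\int_0^\infty \sum_{j=N}^\infty a_{i+j,j}\, f_{i+j}^n(s) f_j^n(s)\,\mathrm{d}s\le \frac{\|f^{in}\|_1}{N}, \qquad i\ge 1,\ N\ge 1,
\end{equation*}
together with the $n$-uniform $L^1((0,T))$-bound $\int_0^T \sum_j a_{i,j} f_i^n f_j^n\,\mathrm{d}s\le \|f^{in}\|_1$ on the loss series (obtained from $\min\{i,j\}\ge 1$). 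Combined with the pointwise bound $f_i^n\le \|f^{in}\|_1/i$, these make each $(f_i^n)_n$ bounded in $W^{1,1}((0,T))$ uniformly in $n$; Helly's selection principle and a diagonal extraction then produce a subsequence and a non-negative limit $f=(f_i)_{i\ge 1}$ with $f_i^n(t)\to f_i(t)$ pointwise on $[0,\infty)$ for every $i\ge 1$. To verify the mild formulation (a3) for $f$, I split each reaction series at level $N$: the finite head is handled by Lebesgue's dominated convergence theorem, the tail of the gain term is uniformly small in $n$ by the tail estimate above, while the tail of the loss term, which does not admit such a uniform smallness bound from the dissipation alone, is identified through a Vitali-type argument combining Fatou's lemma with the already established convergence of the gain integral and of $f_i^n(t)$ via the truncated integral equation itself. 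Once (a1)--(a3) hold for $f$, identities \eqref{dm1}--\eqref{dm2} follow by multiplying~\eqref{rbak1} by $i$, summing, and applying Tonelli together with Fatou's lemma to the Step~1 identity.

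\emph{Step~3: weighted tail estimate.} Under~\eqref{aa}--\eqref{ab}, the analogous computation for $\sum_{i=m}^n A_i f_i^n$ using the same change of index $k=i+j$ in the gain term yields, after regrouping,
\begin{equation*}
\frac{\mathrm{d}}{\mathrm{d}t}\sum_{i=m}^n A_i f_i^n = \sum_{j=1}^{n-m}\sum_{k=m+j}^n (A_{k-j}-A_k)\,a_{k,j}\, f_k^n f_j^n - \sum_{j=1}^n\sum_{k=m}^{\min\{m+j-1,n\}} A_k\, a_{k,j}\, f_k^n f_j^n \le 0,
\end{equation*}
both sums on the right being non-positive since the sequence $(A_i)$ is non-decreasing. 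Hence $t\mapsto \sum_{i=m}^n A_i f_i^n(t)$ is non-increasing, and \eqref{dm2a} follows by passing to the limit $n\to\infty$ via Fatou's lemma.
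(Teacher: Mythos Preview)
Your overall architecture matches the paper's, but there is a genuine gap in Step~2, and it is precisely the point the paper identifies as its main contribution.

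\medskip

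\textbf{The loss tail.} You assert that ``the tail of the loss term \ldots\ does not admit such a uniform smallness bound from the dissipation alone'' and propose to bypass this via a Vitali/Fatou argument using the integral equation. The first claim is false, and the workaround does not close. From the \emph{global} dissipation identity you indeed only get $\int_0^\infty \sum_{j\ge N} a_{i,j} f_i^n f_j^n \le \|f^{in}\|_1/i$, which does not vanish as $N\to\infty$. But the paper applies the dissipation argument to the \emph{truncated} weight $\vartheta_j = j\,\mathbf 1_{\{j\ge m\}}$ (this is \Cref{lem3}), obtaining
\[
\int_0^t \sum_{j=m}^n \sum_{k=1}^n \min\{j,k\}\, a_{j,k} f_j^n f_k^n\,\mathrm{d}s \;\le\; 2\sum_{j=m}^\infty j f_j^{in},
\]
and this \emph{does} control the loss tail: for $m>i$, taking $k=i$ in the inner sum yields $\int_0^t \sum_{j\ge m} a_{i,j} f_i^n f_j^n \le 2\sum_{j\ge m} j f_j^{in}$, which tends to zero with $m$ since $f^{in}\in X_{1,+}$ (this is \Cref{prop5}). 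So both reaction tails are handled symmetrically, and no indirect argument is needed.

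Your proposed workaround---deducing convergence of $\int_0^t \sum_j a_{i,j} f_i^n f_j^n$ from convergence of $f_i^n(t)$ and of the gain integral via the equation---does yield convergence to \emph{some} limit $L(t)$, and Fatou gives $\int_0^t \sum_j a_{i,j} f_i f_j \le L(t)$. But you have no mechanism for the reverse inequality: you lack pointwise convergence of the full integrand (the sum is infinite), uniform integrability, or a dominating function. Without identifying $L(t)$ with the correct loss integral, property~(a3) of \Cref{defms} is not established.

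\medskip

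\textbf{The equality~\eqref{dm2}.} Fatou applied to the Step~1 identity only gives $\|f(t)\|_1 + \int_0^t(\cdots)\le \|f^{in}\|_1$, not equality. Summing $i\cdot$\eqref{rbak1} formally recovers equality, but the two series on the right-hand side may individually diverge as the truncation level $M\to\infty$, so the interchange of limits is not justified from (a1)--(a3) alone. The paper instead passes to the limit directly in the finite-$n$ identity, and both convergences \eqref{dm4}--\eqref{dm5} rely again on the tail estimate~\eqref{estm2} from \Cref{lem3}.

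\medskip

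Step~3 is fine and coincides with the paper's argument.
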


\Cref{thm1} shows that no growth condition or structure assumption is needed to ensure the existence of a global mild solution to~\eqref{rbak}. In addition, it provides the stability of the space of sequences satisfying~\eqref{ab} under the additional assumption~\eqref{aa} on the kinetic coefficients. Let us also emphasize that no growth condition is required on $(A_i)_{i\ge 1}$ in~\eqref{aa}.

We next turn to classical solutions and identify kinetic coefficients and initial conditions guaranteeing their existence.

\begin{theorem}\label{thm2}
	Assume that there is a non-decreasing sequence $(A_i)_{i\ge 1}$ of positive real numbers with $A_1\ge 1$ such that the kinetic coefficients $(a_{i,j})_{i,j\ge 1}$ satisfy~\eqref{aa} and consider $f^{in}\in X_{1,+}$ satisfying~\eqref{ab}. Then there is at least one classical solution $f=(f_i)_{i\ge 1}$ to~\eqref{rbak}; that is, $f_i\in C^1([0,\infty))$,
	\begin{equation}
		\sum_{j=1}^\infty a_{i+j,j} f_{i+j} f_j \in C([0,\infty))\,, \quad \sum_{j=1}^\infty a_{i,j} f_j \in C([0,\infty))\,, \label{dm1c}
	\end{equation}
	and~\eqref{rbak} is satisfied pointwisely for all $i\ge 1$. In addition, $f$ satisfies~\eqref{dm2}, as well as
	\begin{equation}
		M_A(f(t)) := \sum_{i=1}^\infty A_i f_i(t) \le M_A(f^{in})\,, \qquad t\ge 0\,. \label{mab}
	\end{equation}
\end{theorem}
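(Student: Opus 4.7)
The plan is to invoke \Cref{thm1} to produce a mild solution $f$ satisfying the tail estimate~\eqref{dm2a}, then to upgrade it to a classical solution by exploiting the uniform (in $t$) control on $A$-weighted tails that~\eqref{dm2a} provides.

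First I would apply \Cref{thm1} under the hypotheses~\eqref{aa}--\eqref{ab} to obtain a mild solution $f$ which, for every $m\ge 1$, satisfies $\sum_{i=m}^\infty A_i f_i(t) \le \sum_{i=m}^\infty A_i f_i^{in}$ for all $t\ge 0$. Specializing to $m=1$ yields~\eqref{mab}, while~\eqref{dm2} is directly inherited from \Cref{thm1}. The remaining task is therefore the regularity and the pointwise validity of~\eqref{rbak1}.

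The core step is to show, using~\eqref{aa} and~\eqref{dm2a}, that the two series in~\eqref{dm1c} are uniformly convergent on $[0,\infty)$, whence continuous. For the loss term, since $A_1\ge 1$,
\[
\sum_{j=N}^\infty a_{i,j} f_j(t) \le A_i \sum_{j=N}^\infty A_j f_j(t) \le A_i \sum_{j=N}^\infty A_j f_j^{in},
\]
and the right-hand side tends to $0$ as $N\to\infty$ uniformly in $t$ by~\eqref{ab}. For the gain term, the monotonicity of $(A_i)_{i\ge 1}$ together with $A_{i+j} f_{i+j}(t) \le M_A(f(t)) \le M_A(f^{in})$ gives
\[
\sum_{j=N}^\infty a_{i+j,j} f_{i+j}(t) f_j(t) \le \sum_{j=N}^\infty A_{i+j} f_{i+j}(t)\, A_j f_j(t) \le M_A(f^{in}) \sum_{j=N}^\infty A_j f_j^{in},
\]
which again vanishes uniformly in $t\ge 0$ as $N\to\infty$. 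Each finite partial sum is continuous since every $f_j$ lies in $C([0,\infty))$ by (a1), so the uniform convergence transfers continuity to the full series, establishing~\eqref{dm1c}.

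Finally, the integral identity in (a3) expresses $f_i$ as $f_i^{in}$ plus the integral over $[0,t]$ of $\sum_j a_{i+j,j} f_{i+j} f_j - f_i \sum_j a_{i,j} f_j$, which is now a continuous function of time. The fundamental theorem of calculus then yields $f_i\in C^1([0,\infty))$ and the pointwise validity of~\eqref{rbak1} for every $i\ge 1$, concluding the construction of a classical solution. The main obstacle is really the uniform tail estimate: without~\eqref{dm2a} one only has an $L^\infty$-bound on $\|f(t)\|_1$, which controls first moments but not the $A$-weighted tails needed to dominate the quadratic series when $(A_i)_{i\ge 1}$ has no prescribed growth. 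It is precisely the propagation of arbitrary monotone weighted moments built into \Cref{thm1} that makes the argument work.
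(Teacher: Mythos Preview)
Your proposal is correct and follows essentially the same route as the paper: invoke \Cref{thm1} to obtain a mild solution with the tail estimate~\eqref{dm2a}, then use~\eqref{aa} and~\eqref{dm2a} to bound the $A$-weighted tails of the two series uniformly in $t$, so that the series define continuous functions and (a3) upgrades to a $C^1$ identity. The only cosmetic difference is that the paper argues continuity directly via $|\,\cdot(s)-\cdot(t)\,|$ with a tail split, whereas you phrase it as uniform convergence of the series; the underlying estimate is identical.
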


\Cref{thm2} extends \cite[Theorem~3.1]{dCPS2012}, which corresponds to the choice $A_i=i \sqrt{K}$, $i\ge 1$, for some $K>0$. It is worth mentioning at this point that, given any kinetic coefficients $(a_{i,j})_{i,j\ge 1}$ satisfying~\eqref{a}, the assumption~\eqref{aa} is actually satisfied by the sequence $A^a = (A_{i}^a)_{i\ge 1}$ defined by
\begin{equation*}
	A_i^a := 1 + \max_{1\le j,k\le i} a_{j,k}\,, \qquad i\ge 1\,.
\end{equation*}
Therefore, the restrictive assumption in \Cref{thm2} is the tail behaviour~\eqref{ab} of $f^{in}$.

We supplement \Cref{thm2} with a uniqueness result, valid under a stronger assumption on the decay at infinity of the initial condition.

\begin{theorem}\label{thm3}
	Assume that there is a non-decreasing sequence $(A_i)_{i\ge 1}$ of positive real numbers with $A_1\ge 1$ such that the kinetic coefficients $(a_{i,j})_{i,j\ge 1}$ satisfy~\eqref{aa} and consider $f^{in}\in X_{1,+}$ satisfying
	\begin{equation}
		M_{A^2}(f^{in}) := \sum_{i=1}^\infty A_i^2 f_i^{in} < \infty\,, \label{a2b}
	\end{equation}
	Then there is a unique classical solution $f=(f_i)_{i\ge 1}$ to~\eqref{rbak} satisfying
	\begin{equation}
		M_{A^2}(f(t)) := \sum_{i=1}^\infty A_i^2 f_i(t) \le M_{A^2}(f^{in})\,, \qquad t\ge 0\,. \label{ma2b}
	\end{equation}
\end{theorem}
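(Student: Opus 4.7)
The proof splits naturally into existence and uniqueness. Existence of a classical solution satisfying~\eqref{ma2b} is obtained directly from \Cref{thm2} applied to the weight sequence $(A_i^2)_{i\ge 1}$ in place of $(A_i)_{i\ge 1}$: this sequence is non-decreasing with $A_1^2\ge 1$, the inequality $a_{i,j}\le A_iA_j\le A_i^2A_j^2$ follows from $A_i\ge 1$, and~\eqref{a2b} coincides with~\eqref{ab} for $(A_i^2)_{i\ge 1}$.

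For uniqueness, let $f$ and $g$ be two classical solutions in this class and set $h_i:=f_i-g_i$ and $H(t):=\sum_{i\ge 1}A_i|h_i(t)|$; since $A_i\le A_i^2$, one has $H(t)\le 2M_{A^2}(f^{in})$. The target is the integral inequality
\begin{equation*}
H(t)\le 2M_{A^2}(f^{in})\int_0^t H(s)\,\mathrm{d}s,\qquad t\ge 0,
\end{equation*}
which together with $H(0)=0$ forces $H\equiv 0$ by Gronwall's lemma. I would work on the truncated partial sums $S_N(t):=\sum_{i=1}^N A_i|h_i(t)|$, which are absolutely continuous since $f_i,g_i\in C^1([0,\infty))$. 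Using the bilinear identities $f_{i+j}f_j-g_{i+j}g_j=h_{i+j}f_j+g_{i+j}h_j$ and $f_if_j-g_ig_j=h_if_j+g_ih_j$, together with the a.e.\ bound $\frac{\mathrm{d}|h_i|}{\mathrm{d}t}\le\mathrm{sgn}(h_i)\frac{\mathrm{d}h_i}{\mathrm{d}t}$, $\frac{\mathrm{d}S_N}{\mathrm{d}t}$ is majorised by a sum $T_1^N+T_2^N-T_3^N+T_4^N$. After the index shift $k=i+j$ and the use of $A_{k-j}\le A_k$, the combination $T_1^N-T_3^N$ is bounded above by a non-positive main term plus a boundary remainder for $k>N$, which is controlled by $M_A(f(t))\sum_{k>N}A_k^2|h_k(t)|$ via $a_{k,j}\le A_kA_j$ and tends to $0$ as $N\to\infty$. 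The terms $T_2^N$ and $T_4^N$ are each majorised by $M_{A^2}(g(t))H(t)\le M_{A^2}(f^{in})H(t)$ using the same bound on $a_{i,j}$.

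The main obstacle is the rigorous limit passage $N\to\infty$ in the differential estimate for $S_N$, in particular the vanishing of the boundary remainder in an integrated sense; this is precisely where the reinforced moment~\eqref{a2b} is decisive, since estimating $a_{k,j}$ via $A_kA_j$ costs one extra factor of $A$ beyond what would be controlled by~\eqref{ab} alone.
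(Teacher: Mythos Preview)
Your proposal is correct and follows essentially the same route as the paper. Existence is handled identically (apply \Cref{thm2} to the weight $(A_i^2)_{i\ge 1}$), and the uniqueness argument is algebraically the same: the cancellation $T_1^N-T_3^N\le 0$ up to a tail remainder is precisely the paper's use of $A_{j-i}\le A_j$ followed by an index swap to absorb the $f$-contribution into the negative loss term, while your bounds on $T_2^N$ and $T_4^N$ reproduce the paper's estimate $2\sum_{i,j}A_j a_{i,j}g_j|E_i|\le 2M_{A^2}(f^{in})\sum_i A_i|E_i|$. The only difference is presentational: you work with truncations $S_N$ and pass to the limit, whereas the paper manipulates the infinite sums directly; your version is in fact the more careful of the two, and the vanishing of the boundary remainder follows from~\eqref{ma2b} (for both $f$ and $g$) and dominated convergence exactly as you indicate.
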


The evolution system~\eqref{rbak} bears some similarity with the celebrated Smoluchowski coagulation equation, which corresponds to the elementary reaction $P_i+P_j\to P_{i+j}$ and reads \cite{Smo1916}
\begin{equation}\label{sm}
	\begin{split}
		\frac{\mathrm{d}f_i}{\mathrm{d}t} & = \frac{1}{2} \sum_{j=1}^{i-1} a_{i-j,j} f_{i-j} f_j - \sum_{j=1}^\infty a_{i,j} f_i f_j\,, \qquad i\ge 1\,, \\
		f_i(0) & = f_i^{in}\,, \qquad i\ge 1\,.
	\end{split}
\end{equation}
Indeed, though describing different physical processes, their mathematical structure is similar. Both are countably infinite systems of quadratic differential equations, which are strongly coupled due to the infinite series involved in the reaction terms. Thus, not surprisingly, the techniques developed to study the well-posedness of~\eqref{sm} in the seminal paper \cite{BaCa1990} adapt well to~\eqref{rbak}. In particular, the same functional framework and similar assumptions on the rate coefficients are used in \cite{dCPS2012, Ver2024} to establish existence results for~\eqref{rbak}. Still, the dynamics of~\eqref{rbak} differs from that of~\eqref{sm}. Indeed, conservation or decrease of matter is expected for the latter, along with a monotone increase of superlinear moments (whenever finite), while mass and all superlinear moments are dissipated for the former throughout time evolution. In addition, Smoluchowski's coagulation equation~\eqref{sm} has no non-zero solution for rapidly increasing kinetic coefficients  such as $a_{i,j} =i^\alpha + j^\alpha$ with $\alpha>1$ \cite{CadC1992, vanD1987}. The outcome of \Cref{thm1} and \Cref{thm2} shows that no such phenomenon occurs for the system~\eqref{rbak}, a feature that can be explained by its dissipativity properties and we shall exploit thoroughly the latter in the analysis to be presented below. 

Specifically, \Cref{sec.2} is devoted to the proof of \Cref{thm1}, which relies on a compactness method and the approximation of~\eqref{rbak} by finite systems of ordinary differential equations as in \cite{BaCa1990, dCPS2012, Ver2024}. The cornerstone of the proof and the main contribution of this paper is \Cref{lem3}, which shows that the dissipation of the tails $\sum_{i=m}^\infty i f_i$ of the first moment controls the tails of the series on the right-hand side of~\eqref{rbak}. Besides guaranteeing the compactness of the approximate sequences, these estimates are instrumental in the derivation of the time evolution~\eqref{dm2} of the first moment. We next turn to the existence of classical solutions in \Cref{sec.3} which combines \Cref{thm1} and moment estimates. The uniqueness proof is provided in \Cref{sec.4} and both the assumptions on the initial condition in \Cref{thm3} and its proof are directly inspired from similar results for the coagulation-fragmentation equations, see \cite[Section~8.2.5]{BLL2019} and the references therein.   

\section{Existence: mild solutions}\label{sec.2}

We first recall the well-posedness of~\eqref{rbakf} established in \cite[Proposition~2.3]{dCPS2012}, along with a useful identity satisfied by solutions to~\eqref{rbakf}.

\begin{proposition}\label{prop1}
	Let $n\ge 3$ and $f^{in}\in X_{1,+}$. There is a unique solution $f^n=(f_i^n)_{1\le i \le n}\in C^1([0,\infty),[0,\infty)^n)$ to~\eqref{rbakf}. In addition, if $(\vartheta_i)_{i\ge 1}$ is a sequence of real numbers, then
	\begin{equation}
		\frac{\mathrm{d}}{\mathrm{d}t} \sum_{j=1}^n \vartheta_j f_j^n + \sum_{j=2}^n \sum_{k=1}^{j-1} (\vartheta_j - \vartheta_{j-k}) a_{j,k} f_j^n f_k^n + \sum_{j=1}^n \sum_{k=j}^n \vartheta_j a_{j,k} f_j^n f_k^n = 0\,. \label{p1}
	\end{equation}
\end{proposition}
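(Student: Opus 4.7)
The finite system \eqref{rbakf} is a system of $n$ quadratic ordinary differential equations in $\RR^n$ whose right-hand side is polynomial, hence locally Lipschitz on all of $\RR^n$. The Cauchy--Lipschitz theorem therefore delivers a unique maximal classical solution $f^n\in C^1([0,T_n),\RR^n)$ for some $T_n\in(0,\infty]$. Positivity follows from a quasi-positivity check on the vector field: at any point of the boundary of $[0,\infty)^n$ at which $f_i^n=0$ while the other coordinates remain non-negative, the loss term $-\sum_{j=1}^n a_{i,j} f_i^n f_j^n$ vanishes and the gain term is a non-negative sum, so the flow points into the cone. A standard comparison argument then shows that $[0,\infty)^n$ is forward invariant on $[0,T_n)$.

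Next I would derive the identity \eqref{p1}, from which global existence will follow. Starting from \eqref{rbakfa}, one multiplies by $\vartheta_i$ and sums over $i=1,\ldots,n$. In the gain double sum the change of variables $l=i+j$ (so that $i$ is replaced by $l-j$ and the range of summation becomes $2\le l\le n$, $1\le j\le l-1$) yields, after relabelling,
\begin{equation*}
\sum_{i=1}^{n-1}\vartheta_i\sum_{j=1}^{n-i} a_{i+j,j} f_{i+j}^n f_j^n = \sum_{j=2}^n\sum_{k=1}^{j-1}\vartheta_{j-k} a_{j,k} f_j^n f_k^n\,.
\end{equation*}
In the loss double sum one partitions $\{1,\ldots,n\}^2$ into the disjoint pieces $\{k<j\}$ and $\{k\ge j\}$; combining the $k<j$ piece with the transformed gain produces the telescoping coefficient $\vartheta_j-\vartheta_{j-k}$, which is exactly the first double sum in \eqref{p1}, while the $k\ge j$ piece is the second. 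Rearranging then gives \eqref{p1}.

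Applying \eqref{p1} with the weights $\vartheta_i=i$ yields
\begin{equation*}
\frac{\mathrm{d}}{\mathrm{d}t}\sum_{j=1}^n j f_j^n = -\sum_{j=2}^n\sum_{k=1}^{j-1} k\, a_{j,k} f_j^n f_k^n - \sum_{j=1}^n\sum_{k=j}^n j\, a_{j,k} f_j^n f_k^n \le 0\,,
\end{equation*}
so that $\sum_{j=1}^n j f_j^n(t)\le\|f^{in}\|_1$ on $[0,T_n)$. This uniformly bounds every component of $f^n$, precludes blow-up, and forces $T_n=\infty$. The whole argument is essentially routine ODE theory applied to a polynomial vector field with a quasi-positive structure; the only mildly delicate step is the bookkeeping of indices in the gain-term change of variables, which is the main source of potential errors but presents no conceptual obstacle.
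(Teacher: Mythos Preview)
Your argument is correct and is precisely the standard proof: local Cauchy--Lipschitz for the polynomial vector field, quasi-positivity for invariance of the non-negative cone, the index change $l=i+j$ in the gain term together with the split $k<j$ versus $k\ge j$ in the loss term to obtain~\eqref{p1}, and then the choice $\vartheta_i=i$ to derive the a~priori bound ensuring $T_n=\infty$. The paper itself does not give a proof of this proposition but simply quotes \cite[Proposition~2.3]{dCPS2012}, whose content is exactly what you have written out; so there is nothing to compare beyond noting that your write-up makes explicit what the paper leaves to the reference.
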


From now on, $f^{in}\in X_{1,+}$ is given and, for each $n\ge 3$, $f^n=(f_i^n)_{1\le i \le n}$ denotes the corresponding solution to~\eqref{rbakf} provided by \Cref{prop1}.

\subsection{Compactness}\label{sec.2a}

We first draw several consequences of~\eqref{p1} and begin with the following observation when the sequence $(\vartheta_i)_{i\ge 1}$ is assumed to be non-negative and non-decreasing.

\begin{corollary}\label{cor2}
Let $(\vartheta_i)_{i\ge 1}$ be a non-negative and non-decreasing sequence. Then, for $n\ge 3$ and $t>0$,
\begin{subequations}\label{estg}
	\begin{align}
		0 & \le \sum_{j=1}^n \vartheta_j f_j^n(t) \le \sum_{i=1}^n \vartheta_j f_j^{in}\,, \label{estg1} \\
		0 & \le \int_0^t \sum_{j=2}^n \sum_{k=1}^{j-1} (\vartheta_j - \vartheta_{j-k}) a_{j,k} f_j^n(s) f_k^n(s)\ \mathrm{d}s \le \sum_{j=1}^n \vartheta_j f_j^{in}\,, \label{estg2} \\
		0 & \le \int_0^t \sum_{j=1}^n \sum_{k=j}^n \vartheta_j a_{j,k} f_j^n(s) f_k^n(s) \ \mathrm{d}s \le \sum_{j=1}^n \vartheta_j f_j^{in}\,. \label{estg3}
	\end{align}
\end{subequations}
\end{corollary}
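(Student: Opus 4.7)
The plan is to derive all three bounds in \eqref{estg} directly from the identity \eqref{p1} of \Cref{prop1}, exploiting the sign of each individual term once we plug in a non-negative and non-decreasing test sequence $(\vartheta_j)_{j\ge 1}$.

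First I would observe that, under the monotonicity assumption on $(\vartheta_j)_{j\ge 1}$, one has $\vartheta_j - \vartheta_{j-k}\ge 0$ whenever $1\le k\le j-1$, while $\vartheta_j\ge 0$ for every $j\ge 1$. Combined with the non-negativity of the kinetic coefficients in~\eqref{a} and of the components of $f^n$ granted by \Cref{prop1}, this ensures that both double sums appearing in~\eqref{p1} are non-negative. Consequently,
\begin{equation*}
	\frac{\mathrm{d}}{\mathrm{d}t} \sum_{j=1}^n \vartheta_j f_j^n(t) \le 0\,, \qquad t\ge 0\,,
\end{equation*}
so that integrating over $(0,t)$ and using \eqref{rbakfb} yields the upper bound in~\eqref{estg1}; the lower bound is immediate from the non-negativity of $\vartheta_j$ and $f_j^n$.

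Next, integrating the identity~\eqref{p1} over $(0,t)$ gives
\begin{equation*}
	\sum_{j=1}^n \vartheta_j f_j^n(t) + \int_0^t \sum_{j=2}^n \sum_{k=1}^{j-1} (\vartheta_j - \vartheta_{j-k}) a_{j,k} f_j^n f_k^n\ \mathrm{d}s + \int_0^t \sum_{j=1}^n \sum_{k=j}^n \vartheta_j a_{j,k} f_j^n f_k^n\ \mathrm{d}s = \sum_{j=1}^n \vartheta_j f_j^{in}\,.
\end{equation*}
Since each of the three terms on the left-hand side is non-negative by the sign considerations above, discarding any two of them produces an upper bound for the third, which is exactly what~\eqref{estg2} and~\eqref{estg3} assert. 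The corresponding lower bounds are again immediate.

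I do not expect any genuine obstacle here: the statement is a direct sign-chasing corollary of the differential identity~\eqref{p1}, and the only point requiring care is the observation that the monotonicity of $(\vartheta_j)_{j\ge 1}$ makes the factor $\vartheta_j-\vartheta_{j-k}$ non-negative throughout the range of summation in the second term.
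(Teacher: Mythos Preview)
Your proposal is correct and follows exactly the same approach as the paper: observe that the non-negativity and monotonicity of $(\vartheta_j)_{j\ge 1}$, together with the non-negativity of $a_{j,k}$ and $f_j^n$, make all three terms on the left-hand side of~\eqref{p1} non-negative, and then integrate in time. The paper's proof is simply a terser version of what you wrote.
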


\begin{proof}
	Since the sequence $(\vartheta_i)_{i\ge 1}$ is non-negative and non-decreasing, the three terms involved in the left-hand side of~\eqref{p1} are non-negative and \Cref{cor2} readily follows from~\eqref{p1} after integration with respect to time.
\end{proof}

We next use a specific choice of $(\vartheta_i)_{i\ge 1}$ in \Cref{cor2} to obtain the following estimates, which could also be derived directly from \cite[Proposition~2.3]{dCPS2012} with the same choice.

\begin{lemma}\label{lem3}
For $n\ge 3$, $m \ge 1$, and $t>0$,
\begin{subequations}\label{estm}
	\begin{align}
		& \sum_{j=m}^n j f_j^n(t) \le \sum_{j=m}^n j f_j^{in}\,, \label{estm1} \\
		& \int_0^t \sum_{j=m}^n \sum_{k=1}^n \min\{j,k\} a_{j,k} f_j^n(s) f_k^n(s)\ \mathrm{d}s \le 2\sum_{j=m}^n j f_j^{in}\,. \label{estm2}
	\end{align}
\end{subequations}
\end{lemma}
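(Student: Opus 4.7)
The plan is to apply \Cref{cor2} with the specific non-negative and non-decreasing sequence $(\vartheta_i)_{i\ge 1}$ defined by $\vartheta_j := j$ for $j\ge m$ and $\vartheta_j := 0$ for $1\le j\le m-1$. With this choice, $\sum_{j=1}^n \vartheta_j z_j = \sum_{j=m}^n j z_j$ for every sequence $(z_j)_{j\ge 1}$, so~\eqref{estm1} follows immediately from~\eqref{estg1}.

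For~\eqref{estm2}, I would split the double sum according to the definition of $\min\{j,k\}$ into
\begin{equation*}
\sum_{j=m}^n \sum_{k=1}^n \min\{j,k\} a_{j,k} f_j^n f_k^n = \sum_{j=m}^n \sum_{k=j}^n j a_{j,k} f_j^n f_k^n + \sum_{j=m}^n \sum_{k=1}^{j-1} k a_{j,k} f_j^n f_k^n\,.
\end{equation*}
Since $\vartheta_j$ vanishes for $j<m$, the first summand on the right coincides with $\sum_{j=1}^n \sum_{k=j}^n \vartheta_j a_{j,k} f_j^n f_k^n$, so its time integral is bounded by $\sum_{j=m}^n j f_j^{in}$ via~\eqref{estg3}. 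For the second summand, the key pointwise observation is that $k\le \vartheta_j-\vartheta_{j-k}$ for all $j\ge m$ and $1\le k\le j-1$: indeed, $\vartheta_j-\vartheta_{j-k}=k$ when $j-k\ge m$, and $\vartheta_j-\vartheta_{j-k}=j\ge k$ when $1\le j-k<m$. Using again that $\vartheta_j=\vartheta_{j-k}=0$ whenever $j<m$, the outer sum extends trivially to start at $j=2$, giving
\begin{equation*}
\sum_{j=m}^n \sum_{k=1}^{j-1} k a_{j,k} f_j^n f_k^n \le \sum_{j=2}^n \sum_{k=1}^{j-1} (\vartheta_j - \vartheta_{j-k}) a_{j,k} f_j^n f_k^n\,,
\end{equation*}
whose time integral is bounded by $\sum_{j=m}^n j f_j^{in}$ via~\eqref{estg2}. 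Adding the two bounds produces~\eqref{estm2}, with the factor~$2$ reflecting the independent application of~\eqref{estg2} and~\eqref{estg3}.

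No serious obstacle arises here; the entire argument hinges on selecting the truncated weight $\vartheta$ so that both positive reaction contributions in~\eqref{p1} remain simultaneously informative, and on the short case analysis showing that $\min\{j,k\}$ is dominated pointwise either by $\vartheta_j$ (when $k\ge j$) or by $\vartheta_j-\vartheta_{j-k}$ (when $k<j$). This is precisely how the dissipativity encoded in~\eqref{p1} is converted into the tail control expressed by~\eqref{estm2}.
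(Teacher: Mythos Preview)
Your proof is correct and follows essentially the same approach as the paper: both use the truncated weight $\vartheta_j = j\,\mathbf{1}_{\{j\ge m\}}$, derive~\eqref{estm1} from~\eqref{estg1}, and obtain~\eqref{estm2} by bounding the $k<j$ and $k\ge j$ parts of the double sum separately via~\eqref{estg2} and~\eqref{estg3}. The only cosmetic difference is that the paper computes $\vartheta_j-\vartheta_{j-k}$ explicitly by splitting the $k$-range at $j-m$, whereas you phrase the same computation as the case analysis $k\le \vartheta_j-\vartheta_{j-k}$; the content is identical.
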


\begin{proof}
\Cref{lem3} readily follows from \Cref{cor2} with the choice
\begin{equation*}
	\vartheta_i = 0\,, \quad 1\le i \le m-1\,, \qquad \vartheta_i = i\,, \quad i\ge m\,.
\end{equation*}	
Indeed, this choice of the sequence $(\vartheta_i)_{i\ge 1}$ gives
\begin{align}
	\sum_{j=2}^n \sum_{k=1}^{j-1} (\vartheta_j - \vartheta_{j-k}) a_{j,k} f_j^n f_k^n & = \sum_{j=m}^n \sum_{k=j-m+1}^{j-1} j a_{j,k} f_j^n f_k^n + \sum_{j=m}^n \sum_{k=1}^{j-m} k a_{j,k} f_j^n f_k^n \nonumber \\
	& \ge \sum_{j=m}^n \sum_{k=1}^{j-1} \min\{j,k\} a_{j,k} f_j^n f_k^n \label{p2}
\end{align}
and
\begin{equation}
	\sum_{j=1}^n \sum_{k=j}^n \vartheta_j a_{j,k} f_j^n f_k^n = \sum_{j=m}^n \sum_{k=j}^n j a_{j,k} f_j^n f_k^n = \sum_{j=m}^n \sum_{k=j}^n \min\{j,k\} a_{j,k} f_j^n f_k^n\,, \label{p3}
\end{equation}
and we infer~\eqref{estm2} from~\eqref{estg2}, \eqref{estg3}, \eqref{p2}, and~\eqref{p3}.
\end{proof}

Applying \Cref{lem3} with $m=1$ provides the following estimates.

\begin{corollary}\label{cor4}
For $n\ge 3$ and $t>0$, 
\begin{subequations}\label{estu}
	\begin{align}
		& \sum_{j=1}^n j f_j^n(t) \le \sum_{j=1}^n j f_j^{in} \le \|f^{in}\|_1\,, \label{estu1} \\
		& \int_0^t \sum_{j=1}^n \sum_{k=1}^n \min\{j,k\} a_{j,k} f_j^n(s) f_k^n(s)\ \mathrm{d}s \le 2 \sum_{j=1}^n j f_j^{in} \le 2 \|f^{in}\|_1\,, \label{estu2} \\
		& \int_0^t \sum_{j=1}^n \left| \frac{\mathrm{d}f_j^n}{\mathrm{d}t}(s) \right|\ \mathrm{d}s \le 4 \| f^{in}\|_1\,. \label{estu3}
	\end{align}
\end{subequations}
\end{corollary}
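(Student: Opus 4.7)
The plan is to get~\eqref{estu1} and~\eqref{estu2} essentially for free from \Cref{lem3} applied with $m=1$, using that $\sum_{j=1}^n j f_j^{in} \le \|f^{in}\|_1$; no extra work is needed for these two inequalities.

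For~\eqref{estu3}, I would start from the ODE~\eqref{rbakfa} and estimate, pointwise in time,
\begin{equation*}
	\sum_{i=1}^n \left|\frac{\mathrm{d}f_i^n}{\mathrm{d}t}\right| \le \sum_{i=1}^n \sum_{j=1}^{n-i} a_{i+j,j} f_{i+j}^n f_j^n + \sum_{i=1}^n \sum_{j=1}^n a_{i,j} f_i^n f_j^n\,.
\end{equation*}
The substitution $k=i+j$ in the first double sum turns it into $\sum_{k=2}^n \sum_{j=1}^{k-1} a_{k,j} f_k^n f_j^n$, which, by non-negativity and the symmetry~\eqref{a}, is itself bounded above by the unrestricted second double sum. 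Consequently the right-hand side is at most $2\sum_{i=1}^n\sum_{j=1}^n a_{i,j} f_i^n f_j^n$.

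The key observation is then the trivial inequality $\min\{i,j\}\ge 1$ for positive integers, which upgrades this bound to $2\sum_{i,j=1}^n \min\{i,j\} a_{i,j} f_i^n f_j^n$. Integrating in time and invoking~\eqref{estu2} immediately yields $\int_0^t \sum_{j=1}^n |\mathrm{d}f_j^n/\mathrm{d}s|\,\mathrm{d}s \le 4\|f^{in}\|_1$. There is no genuine obstacle here: the only step requiring a moment's thought is the reindexing of the gain term, after which the whole estimate reduces to the dissipation bound already established in \Cref{lem3}.
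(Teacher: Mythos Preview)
The proposal is correct and follows essentially the same approach as the paper: \eqref{estu1} and~\eqref{estu2} come from \Cref{lem3} with $m=1$, and \eqref{estu3} is obtained by summing the absolute values of the right-hand sides of~\eqref{rbakfa}, reindexing the gain term to recognise it as a subsum of the loss term, and then invoking $\min\{j,k\}\ge 1$ together with~\eqref{estu2} after integration. Your reindexing $k=i+j$ is just a cosmetic variant of the paper's swap of the order of summation.
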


\begin{proof}
	The bounds~\eqref{estu1} and~\eqref{estu2} are immediate consequences of~\eqref{estm1} and~\eqref{estm2} with $m=1$, respectively. We next infer from~\eqref{rbakf}, \eqref{estu2}, and the lower bound $\min\{j,k\}\ge 1$ for $j\ge 1$ and $k\ge 1$ that
	\begin{align*}
		\sum_{j=1}^n \left| \frac{\mathrm{d}f_j^n}{\mathrm{d}t} \right| & \le \sum_{j=1}^n \left[ \sum_{k=1}^{n-j} a_{j+k,k} f_{j+k}^n f_k^n + \sum_{k=1}^n a_{j,k} f_j^n f_k^n \right] \\
		& = \sum_{k=1}^n \sum_{j=1}^{n-k} a_{j+k,k} f_{j+k}^n f_k^n + \sum_{j=1}^n \sum_{k=1}^n a_{j,k} f_j^n f_k^n \\
		& \le \sum_{k=1}^n \sum_{j=k+1}^{n} a_{j,k} f_j^n f_k^n + 2 \|f^{in}\|_1 \le 4 \|f^{in}\|_1\,,
	\end{align*}
	and the proof of \Cref{cor4} is complete.
\end{proof}

After this preparation, we are in a position to state the main estimate of this section, which provides a control on the tails of the two infinite sums on the right-hand side of~\eqref{rbak1}.

\begin{proposition}\label{prop5}
	For $i\ge 1$, $t>0$, and $n\ge m+i \ge 2(i+1)$,
	\begin{align}
		& \int_0^t \sum_{j=m}^n a_{i,j} f_i^n(s) f_j^n(s)\ \mathrm{d}s \le 2 \sum_{j=m}^\infty j f_j^{in}\,, \label{p4} \\
		& \int_0^t \sum_{j=m}^{n-i} a_{i+j,j} f_{i+j}^n(s) f_j^n(s)\ \mathrm{d}s \le 2 \sum_{j=m}^\infty j f_j^{in}\,. \label{p5} 
	\end{align}
\end{proposition}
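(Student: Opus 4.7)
The plan is to deduce both \eqref{p4} and \eqref{p5} directly from the tail bound \eqref{estm2} in \Cref{lem3} by embedding the single sums on their left-hand sides as subsums of the double sum already controlled there. The basic observation enabling this is that $\min\{j,k\}\ge 1$ whenever $j,k\ge 1$, so the weight $\min\{j,k\}$ can be harmlessly inserted in front of the nonnegative integrand $a_{j,k}f_j^n f_k^n$ without losing anything. The hypothesis $n\ge m+i\ge 2(i+1)$ is used essentially to ensure that the indices appearing after the manipulations below remain within the range $\{1,\dots,n\}$ and that $m+i\ge m$, so that nothing formally breaks.

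For \eqref{p4}, I would first invoke the symmetry $a_{i,j}=a_{j,i}$ from \eqref{a} to rewrite $\sum_{j=m}^n a_{i,j}f_i^n f_j^n=\sum_{j=m}^n a_{j,i}f_j^n f_i^n$. Since $i\ge 1$ and $j\ge m\ge 1$, one has $\min\{j,i\}\ge 1$, so this sum is bounded above by $\sum_{j=m}^n \min\{j,i\}\, a_{j,i}f_j^n f_i^n$, which is precisely the $k=i$ slice of the inner sum in \eqref{estm2}. Because $1\le i\le n$ (using $n\ge m+i>i$), this slice is in turn dominated by the full double sum over $(j,k)$. Integrating in time and applying \eqref{estm2} yields $\int_0^t\sum_{j=m}^n a_{i,j}f_i^n f_j^n\ \mathrm{d}s\le 2\sum_{j=m}^n jf_j^{in}\le 2\sum_{j=m}^\infty jf_j^{in}$, as desired.

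For \eqref{p5}, the same idea applies after the change of index $\ell=i+j$, which rewrites the sum as $\sum_{\ell=m+i}^n a_{\ell,\ell-i}f_\ell^n f_{\ell-i}^n$. Setting $k:=\ell-i$, the hypothesis guarantees $k\ge m\ge 1$, so $\min\{\ell,\ell-i\}=\ell-i\ge 1$; the integrand is therefore bounded above by $\min\{\ell,\ell-i\}\, a_{\ell,\ell-i}f_\ell^n f_{\ell-i}^n$. Since $m+i\ge m$ and $1\le k=\ell-i\le n-i\le n$, the resulting terms form a subcollection of those appearing in the double sum of \eqref{estm2}, and integrating then invoking \eqref{estm2} closes the argument.

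Given \Cref{lem3}, there is no real obstacle to overcome: \Cref{prop5} amounts to the observation that the two quadratic terms in the right-hand side of \eqref{rbak1} are each dominated (after symmetrization or reindexation) by a sub-collection of the dissipative double sum $\sum_{j,k}\min\{j,k\}a_{j,k}f_j^n f_k^n$. The value of the proposition lies not in its proof but in its consequence: uniform in $n$, the tails (in the index $j$) of the reaction terms for each fixed $i$ are controlled by the tails of $\|f^{in}\|_1$, which will later allow passage to the limit $n\to\infty$ in \eqref{rbakfa}.
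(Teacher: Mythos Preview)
Your proof is correct and follows the same strategy as the paper: both arguments bound the single sums by the double sum $\sum_{j=m}^n\sum_{k=1}^n\min\{j,k\}a_{j,k}f_j^nf_k^n$ controlled in \eqref{estm2}, using that $\min\{j,k\}\ge 1$ for $j,k\ge 1$. Your treatment of \eqref{p4} is in fact slightly more direct than the paper's, which splits the range $m\le j\le n$ at $j=m+i$ and handles the two pieces separately before recombining; your one-line observation that the $k=i$ slice of the double sum already dominates $\sum_{j=m}^n a_{i,j}f_i^nf_j^n$ bypasses that split entirely.
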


\begin{proof}
We first note that, for $j\in\{1,\ldots,n\}$,
\begin{equation*}
	1 \le i \le j-m \iff j \ge m+i \;\;\text{ and }\;\; j+1-m \le i \le n \iff j \le m + i -1\,.
\end{equation*}
Thanks to this observation and the choice $n\ge m+i$, 
\begin{equation*}
	\sum_{j=m+1}^n \sum_{k=1}^{j-m} k a_{j,k} f_j^n f_k^n \ge \sum_{j=m+i}^n \sum_{k=1}^{j-m} a_{j,k} f_j^n f_k^n \ge \sum_{j=m+i}^n a_{j,i} f_j^n f_i^n
\end{equation*}
and
\begin{equation*}
	\sum_{j=m}^n \sum_{k=j-m+1}^n \min\{j,k\} a_{j,k} f_j^n f_k^n \ge \sum_{j=m}^{m+i-1} \sum_{k=j-m+1}^n a_{j,k} f_j^n f_k^n \ge \sum_{j=m}^{m+i-1} a_{j,i} f_j^n f_i^n\,,
\end{equation*}
and we infer from~\eqref{estm2} and the above two inequalities that
\begin{align*}
	\int_0^t \sum_{j=m}^n a_{i,j} f_i^n(s) f_j^n(s)\ \mathrm{d}s & \le \int_0^t \sum_{j=m}^n \sum_{k=1}^n \min\{j,k\} a_{i,j} f_i^n(s) f_j^n(s)\ \mathrm{d}s \\
	& \le 2 \sum_{j=m}^n j f_j^{in} \le 2 \sum_{j=m}^\infty j f_j^{in}\,,
\end{align*}
hence~\eqref{p4}. Similarly, since $n\ge m+i$,
\begin{equation*}
	\sum_{j=m}^n \sum_{k=j-m+1}^n \min\{j,k\} a_{j,k} f_j^n f_k^n \ge \sum_{j=m}^{n-i} \sum_{k=j-m+1}^n a_{j,k} f_j^n f_k^n \ge \sum_{j=m}^{n-i} j a_{j,i+j} f_j^n f_{i+j}^n\,,
\end{equation*}
from which~\eqref{p5} readily follows due to~\eqref{estm2}.
\end{proof}

\subsection{Convergence}\label{sec.2b}

\begin{proof}[Proof of \Cref{thm1}]
Owing to \Cref{cor4} (and in particular~\eqref{estu1} and~\eqref{estu3}), we are in a position to apply Helly's selection principle \cite[Theorem~2.35]{Leo2009}, along with a diagonal process, to find a sequence $(n_l)_{l\ge 1}$, $n_l\to\infty$, and a sequence of functions $(f_j)_{j\ge 1}$ such that
\begin{equation}
	\lim_{l\to\infty} f_i^{n_l}(t) = f_i(t) \;\;\text{ for all }\;\; t\ge 0 \;\;\text{ and }\;\; i\ge 1\,. \label{co1}
\end{equation}

Let us now fix $i\ge 1$. For $t>0$, $m\ge i$, and $n_l\ge m+i$, it follows from~\eqref{estu2}, \eqref{co1}, and the Lebesgue dominated convergence theorem that
\begin{align*}
	\int_0^t \sum_{j=1}^m a_{i+j,j} f_{i+j}(s) f_j(s)\ \mathrm{d}s & \le \int_0^t \sum_{j=1}^m \sum_{k=1}^{m+i} a_{k,j} f_{k}(s) f_j(s)\ \mathrm{d}s \\
	& = \lim_{l\to\infty} \int_0^t \sum_{j=1}^m \sum_{k=1}^{m+i} a_{k,j} f_{k}^{n_l}(s) f_j^{n_l}(s)\ \mathrm{d}s \\
	& \le 2 \|f^{in}\|_1
\end{align*}
and
\begin{align*}
	\int_0^t \sum_{j=1}^m a_{i,j} f_{i}(s) f_j(s)\ \mathrm{d}s & \le \int_0^t \sum_{i=1}^m \sum_{j=1}^{m} a_{i,j} f_{i}(s) f_j(s)\ \mathrm{d}s \\
	& = \lim_{l\to\infty} \int_0^t \sum_{j=1}^m \sum_{k=1}^{m} a_{i,j} f_{i}^{n_l}(s) f_j^{n_l}(s)\ \mathrm{d}s \\
	& \le 2 \|f^{in}\|_1\,.
\end{align*}
Letting $m\to\infty$ in the above two inequalities and using Fatou's lemma lead us to
\begin{equation}
	\sum_{j=1}^\infty a_{i+j,j} f_{i+j} f_j\in L^1((0,t))\,, \quad \sum_{j=1}^\infty a_{i,j} f_{i} f_j\in L^1((0,t))\,. \label{co2}
\end{equation}
Similarly, we infer from~\eqref{estm2} that, for $n_l \ge r>m$,
\begin{equation*}
	\int_0^t \sum_{j=m}^r \sum_{k=1}^r \min\{j,k\} a_{j,k} f_j^{n_l}(s) f_k^{n_l}(s)\ \mathrm{d}s \le 2 \sum_{j=m}^\infty j f_j^{in}\,.
\end{equation*}
Taking the limit $l\to\infty$ and using~\eqref{co1} yield
\begin{equation*}
	\int_0^t \sum_{j=m}^r \sum_{k=1}^r \min\{j,k\} a_{j,k} f_j(s) f_k(s)\ \mathrm{d}s \le 2 \sum_{j=m}^\infty j f_j^{in}\,.
\end{equation*}
We then let $r\to\infty$ and deduce from Fatou's lemma that
\begin{equation}
	\int_0^t \sum_{j=m}^\infty \sum_{k=1}^\infty \min\{j,k\} a_{j,k} f_j(s) f_k(s)\ \mathrm{d}s \le 2 \sum_{j=m}^\infty j f_j^{in}\,. \label{dm1m}
\end{equation} 
We have thus shown that $f$ satisfies \Cref{defms}~(a2) and~\eqref{dm1}.

Next, for $t>0$ and $n_l\ge m+i \ge 2(i+1)$, we infer from~\eqref{p5} that
\begin{align*}
	& \int_0^t \left| \sum_{j=1}^{n_l-i} a_{i+j,j} f_{i+j}^{n_l}(s) f_j^{n_l}(s) - \sum_{j=1}^\infty a_{i+j,j} f_{i+j}(s) f_j(s) \right|\ \mathrm{d}s \\
	& \hspace{1cm} \le \int_0^t \sum_{j=1}^{m-1} a_{i+j,j} \left|  f_{i+j}^{n_l}(s) f_j^{n_l}(s) -  f_{i+j}(s) f_j(s) \right|\ \mathrm{d}s + \int_0^t \sum_{j=m}^{n-i} a_{i+j,j} f_{i+j}^{n_l}(s) f_j^{n_l}(s)\ \mathrm{d}s \\
	& \hspace{2cm} + \int_0^t \sum_{j=m}^\infty a_{i+j,j} f_{i+j}(s) f_j(s)\ \mathrm{d}s \\
	& \hspace{1cm} \le \int_0^t \sum_{j=1}^{m-1} a_{i+j,j} \left|  f_{i+j}^{n_l}(s) f_j^{n_l}(s) -  f_{i+j}(s) f_j(s) \right|\ \mathrm{d}s + 2 \sum_{j=m}^{\infty} j f_j^{in} \\
	& \hspace{2cm} + \int_0^t \sum_{j=m}^\infty a_{i+j,j} f_{i+j}(s) f_j(s)\ \mathrm{d}s\,.
\end{align*}
Thanks to~\eqref{estu1}, \eqref{co1}, and the Lebesgue dominated convergence theorem, we may take the limit $\l\to\infty$ in the above inequality and find
\begin{align*}
	& \limsup_{l\to\infty} \int_0^t \left| \sum_{j=1}^{n_l-i} a_{i+j,j} f_{i+j}^{n_l}(s) f_j^{n_l}(s) - \sum_{j=1}^\infty a_{i+j,j} f_{i+j}(s) f_j(s) \right|\ \mathrm{d}s \\
	& \hspace{1cm} \le 2 \sum_{j=m}^{\infty} j f_j^{in} + \int_0^t \sum_{j=m}^\infty a_{i+j,j} f_{i+j}(s) f_j(s)\ \mathrm{d}s\,.
\end{align*}
Due to $f^{in}\in X_{1,+}$ and~\eqref{co2}, we let $m\to\infty$ in the above inequality to conclude that
\begin{equation}
	\lim_{l\to\infty} \int_0^t \sum_{j=1}^{n_l-i} a_{i+j,j} f_{i+j}^{n_l}(s) f_j^{n_l}(s)\ \mathrm{d}s = \int_0^t \sum_{j=1}^\infty a_{i+j,j} f_{i+j}(s) f_j(s)\ \mathrm{d}s\,. \label{co3}
\end{equation}
A similar argument, based on~\eqref{p4} instead of~\eqref{p5}, gives
\begin{equation}
	\lim_{l\to\infty} \int_0^t \sum_{j=1}^{n_l} a_{i,j} f_{i}^{n_l}(s) f_j^{n_l}(s)\ \mathrm{d}s = \int_0^t \sum_{j=1}^\infty a_{i,j} f_{i}(s) f_j(s)\ \mathrm{d}s\,. \label{co4}
\end{equation}
Since $f_i^{n_l}$ is a classical solution to~\eqref{rbakf} on $[0,\infty)$, it satisfies 
\begin{equation*}
	f_i^{n_l}(t) = f_i^{in} +  \int_0^t \sum_{j=1}^{n_l-i} a_{i+j,j} f_{i+j}^{n_l}(s) f_j^{n_l}(s)\ \mathrm{d}s - \int_0^t \sum_{j=1}^{n_l} a_{i,j} f_{i}^{n_l}(s) f_j^{n_l}(s)\ \mathrm{d}s
\end{equation*}
and \eqref{co1}, \eqref{co3}, and~\eqref{co4} allow us to take the limit $l\to\infty$ in the above identity and conclude that $f$ satisfies \Cref{defms}~(a3). In particular, the latter and~\eqref{co2} entail that $f_i\in C([0,\infty))$.

Now, the boundedness of $f$ in $X_{1,+}$ is an immediate consequence of~\eqref{estu1} and~\eqref{co1}. Collecting the outcome of the above analysis, we have established that $f$ is a mild solution to~\eqref{rbak} in the sense of \Cref{defms} and satisfies~\eqref{dm1}. 

We are left with proving~\eqref{dm2}. To this end, we infer from~\eqref{p1} with $\vartheta_j=j$, $j\ge 1$, that, for $l\ge 1$,
\begin{equation}
	\sum_{j=1}^{n_l} j f_j^{n_l}(t) + \int_0^t \sum_{j=1}^{n_l} \sum_{k=1}^{n_l} \min\{j,k\} a_{j,k} f_j^{n_l}(s) f_k^{n_l}(s)\ \mathrm{d}s =  \sum_{j=1}^{n_l} j f_j^{in}\,. \label{dm3}
\end{equation}
On the one hand, we readily infer from~\eqref{estm1} that, for $n_l>m$, 
\begin{align*}
	\left| \sum_{j=1}^{n_l} j f_j^{n_l}(t) - \sum_{j=1}^\infty j f_j(t) \right| & \le \sum_{j=1}^m j \big| f_j^{n_l}(t) - f_j(t) \big| + \sum_{j=m}^{n_l} j f_j^{n_l}(t) + \sum_{j=m}^\infty j f_j(t) \\
	& \le \sum_{j=1}^m j \big| f_j^{n_l}(t) - f_j(t) \big| + \sum_{j=m}^\infty j f_j^{in} + \sum_{j=m}^\infty j f_j(t) \,.
\end{align*}
Owing to~\eqref{co1}, we may pass to the limit $l\to\infty$ in the above inequality and obtain
\begin{equation*}
	\limsup_{l\to\infty} \left| \sum_{j=1}^{n_l} j f_j^{n_l}(t) - \sum_{j=1}^\infty j f_j(t) \right| \le \sum_{j=m}^\infty j f_j^{in} + \sum_{j=m}^\infty j f_j(t) \,.
\end{equation*}
Since both $f(t)$ and $f^{in}$ belong to $X_{1,+}$, we let $m\to\infty$ to conclude that
\begin{equation}
	\lim_{l\to\infty} \sum_{j=1}^{n_l} j f_j^{n_l}(t) = \sum_{j=1}^\infty j f_j(t) \,. \label{dm4} 
\end{equation} 
On the other hand, it follows from~\eqref{estm2} and~\eqref{dm1m} that, for $n_l> m\ge 3$,
\begin{align*}
	& \int_0^t \left| \sum_{j=1}^{n_l} \sum_{k=1}^{n_l} \min\{j,k\} a_{j,k} f_j^{n_l}(s) f_k^{n_l}(s) -  \sum_{j=1}^{\infty} \sum_{k=1}^{\infty} \min\{j,k\} a_{j,k} f_j(s) f_k(s) \right|\ \mathrm{d}s \\
	& \qquad \le \int_0^t \sum_{j=1}^{m-1} \sum_{k=1}^{m-1} \min\{j,k\} a_{j,k} \big| f_j^{n_l}(s) f_k^{n_l}(s) - f_j(s) f_k(s) \big|\ \mathrm{d}s \\
	& \qquad\qquad + \int_0^t \left[ \sum_{j=1}^{m-1} \sum_{k=m}^{n_l} \min\{j,k\} a_{j,k} f_j^{n_l}(s) f_k^{n_l}(s) + \sum_{j=1}^{m-1} \sum_{k=m}^\infty  \min\{j,k\} a_{j,k} f_j(s) f_k(s) \right] \ \mathrm{d}s \\
	& \qquad\qquad + \int_0^t \left[ \sum_{j=m}^{n_l} \sum_{k=1}^{n_l} \min\{j,k\} a_{j,k} f_j^{n_l}(s) f_k^{n_l}(s) + \sum_{j=m}^{\infty} \sum_{k=1}^\infty  \min\{j,k\} a_{j,k} f_j(s) f_k(s) \right] \ \mathrm{d}s \\
	& \qquad\le \int_0^t \sum_{j=1}^{m-1} \sum_{k=1}^{m-1} \min\{j,k\} a_{j,k} \big| f_j^{n_l}(s) f_k^{n_l}(s) - f_j(s) f_k(s) \big|\ \mathrm{d}s \\
	& \qquad\qquad + \int_0^t \left[ \sum_{j=m}^{n_l} \sum_{k=1}^{m-1} \min\{j,k\} a_{j,k} f_j^{n_l}(s) f_k^{n_l}(s) + \sum_{j=m}^\infty \sum_{k=1}^{m-1}  \min\{j,k\} a_{j,k} f_j(s) f_k(s) \right] \ \mathrm{d}s \\
	& \qquad\qquad + \int_0^t \left[ \sum_{j=m}^{n_l} \sum_{k=1}^{n_l} \min\{j,k\} a_{j,k} f_j^{n_l}(s) f_k^{n_l}(s) + \sum_{j=m}^{\infty} \sum_{k=1}^\infty  \min\{j,k\} a_{j,k} f_j(s) f_k(s) \right] \ \mathrm{d}s \\
	& \qquad\le \int_0^t \sum_{j=1}^{m-1} \sum_{k=1}^{m-1} \min\{j,k\} a_{j,k} \big| f_j^{n_l}(s) f_k^{n_l}(s) - f_j(s) f_k(s) \big|\ \mathrm{d}s + 8 \sum_{j=m}^\infty j f_j^{in}\,.
\end{align*}
Thanks to~\eqref{estu1}, \eqref{co1}, and Lebesgue's dominated convergence theorem, we may pass to the limit $l\to\infty$ in the above estimate and find
\begin{align*}
	& \limsup_{l\to\infty} \int_0^t \left| \sum_{j=1}^{n_l} \sum_{k=1}^{n_l} \min\{j,k\} a_{j,k} f_j^{n_l}(s) f_k^{n_l}(s) -  \sum_{j=1}^{\infty} \sum_{k=1}^{\infty} \min\{j,k\} a_{j,k} f_j(s) f_k(s) \right|\ \mathrm{d}s \\
	& \hspace{10cm} \le 8 \sum_{j=m}^\infty j f_j^{in}\,.
\end{align*}
We then let $m\to\infty$ and deduce that, since $f^{in}\in X_{1,+}$,
\begin{equation}
	\lim_{l\to\infty} \int_0^t \sum_{j=1}^{n_l} \sum_{k=1}^{n_l} \min\{j,k\} a_{j,k} f_j^{n_l}(s) f_k^{n_l}(s)\ \mathrm{d}s = \int_0^t \sum_{j=1}^{\infty} \sum_{k=1}^{\infty} \min\{j,k\} a_{j,k} f_j(s) f_k(s)\ \mathrm{d}s\,. \label{dm5}
\end{equation}
Gathering~\eqref{dm4} and~\eqref{dm5} allows us to take the limit $l\to\infty$ in~\eqref{dm3} and thereby derive~\eqref{dm2}, thus completing the proof of \Cref{thm1}.

We finally assume that the kinetic coefficients $(a_{i,j})_{i,j\ge 1}$ and the initial condition $f^{in}$ satisfy~\eqref{aa} and~\eqref{ab}, respectively. Let $m\ge 1$ and $t\ge 0$. We infer from~\eqref{estg1} (with $\vartheta_i=A_i$ for $i\ge m$ and $\vartheta_i=0$ for $1\le i\le m-1$) that
\begin{equation*}
	\sum_{i=m}^{r} A_i f_i^{n_l}(t) \le \sum_{i=m}^{n_l} A_i f_i^{n_l}(t) \le \sum_{i=m}^{n_l} A_i f_i^{in} \le \sum_{i=m}^\infty A_i f_i^{in} 
\end{equation*} 
for $l\ge 1$ large enough such that $n_l>r>m$. Thanks to~\eqref{co1}, we may first let $l\to\infty$ and then $r\to\infty$ in the above inequality to obtain~\eqref{dm2a} and complete the proof.
\end{proof}

\section{Classical solutions: existence}\label{sec.3}

\begin{proof}[Proof of \Cref{thm2}]
We now assume that the kinetic coefficients $(a_{i,j})_{i,j\ge 1}$ and the initial condition $f^{in}$ satisfy~\eqref{aa} and~\eqref{ab}, respectively. It follows from \Cref{thm1} that~\eqref{rbak} has a mild solution $f$ which satisfies~\eqref{dm2a} and we shall show that this last property implies the continuity properties~\eqref{dm1c} and the $C^1$-regularity of $f$. Indeed, for $(t,s)\in [0,\infty)^2$ and $m> i\ge 1$, we infer from~\eqref{dm2}, \eqref{aa}, and \eqref{dm2a} that 
\begin{align*}
	& \left| \sum_{j=1}^\infty a_{i,j} f_i(s)f_j(s) - \sum_{j=1}^\infty a_{i,j} f_i(t)f_j(t) \right| \le \sum_{j=1}^\infty a_{i,j} \big| (f_if_j)(t) - (f_if_j)(s)\big| \\
	& \hspace{2cm} \le \sum_{j=1}^{m-1} a_{i,j} \big| (f_if_j)(t) - (f_if_j)(s)\big| +  A_i \sum_{j=m}^\infty A_j \big[ (f_if_j)(t) + (f_if_j)(s) \big] \\
	& \hspace{2cm} \le \sum_{j=1}^{m-1} a_{i,j} \big| (f_if_j)(t) - (f_if_j)(s)\big| +  2A_i \|f^{in}\|_1 \sum_{j=m}^\infty A_j f_j^{in}\,.
\end{align*}
Owing to the continuity of $f_j$ for all $j\ge 1$, 
\begin{equation*}
	\limsup_{s\to t} \left| \sum_{j=1}^\infty a_{i,j} f_i(s)f_j(s) - \sum_{j=1}^\infty a_{i,j} f_i(t)f_j(t) \right| \le 2A_i \|f^{in}\|_1 \sum_{j=m}^\infty A_j f_j^{in}\,,
\end{equation*}
and, thanks to~\eqref{ab}, we may let $m\to\infty$ in the above inequality to conclude that
\begin{equation}
	\lim_{s\to t} \sum_{j=1}^\infty a_{i,j} f_i(s)f_j(s) = \sum_{j=1}^\infty a_{i,j} f_i(t)f_j(t)\,. \label{c04}
\end{equation}
Similarly, let $(t,s)\in [0,\infty)^2$ and $m> i\ge 1$. By~\eqref{aa}, \eqref{ab}, and \eqref{dm2a},
\begin{align*}
	& \left| \sum_{j=1}^\infty a_{i+j,j} f_{i+j}(s)f_j(s) - \sum_{j=1}^\infty a_{i+j,j} f_{i+j}(t)f_j(t) \right| \le \sum_{j=1}^\infty a_{i+j,j} \big| (f_{i+j}f_j)(t) - (f_{i+j}f_j)(s)\big| \\
	& \hspace{2cm} \le \sum_{j=1}^{m-1} a_{i+1,j} \big| (f_{i+j}f_j)(t) - (f_{i+j}f_j)(s)\big| +  \sum_{j=m}^\infty A_{i+j} A_j \big[ (f_{i+j}f_j)(t) + (f_{i+j}f_j)(s) \big] \\
	& \hspace{2cm} \le \sum_{j=1}^{m-1} a_{i+j,j} \big| (f_{i+j}f_j)(t) - (f_{i+j}f_j)(s)\big| + \left( M_A(f(t)) \sum_{j=m}^\infty A_j f_j(t) + M_A(f(s)) \sum_{j=m}^\infty A_j f_j(s) \right) \\
	&  \hspace{2cm} \le \sum_{j=1}^{m-1} a_{i+j,j} \big| (f_{i+j}f_j)(t) - (f_{i+j}f_j)(s)\big| + 2 M_A(f^{in}) \sum_{j=m}^\infty A_j f_j^{in}\,.
\end{align*}
We next proceed as in the proof of~\eqref{c04} to obtain
\begin{equation}
	\lim_{s\to t} \sum_{j=1}^\infty a_{i+j,j} f_{i+j}(s)f_j(s) = \sum_{j=1}^\infty a_{i+j,j} f_{i+j}(t)f_j(t)\,. \label{c05}
\end{equation}
The continuity~\eqref{dm1c} is then an immediate consequence of~\eqref{c04} and~\eqref{c05} and we combine \Cref{defms}~(a3) and~\eqref{dm1c} to conclude that $f_i\in C^1([0,\infty))$ for each $i\ge 1$. Finally, the bound~\eqref{mab} readily follows from~\eqref{dm2a} with $m=1$.
\end{proof}

\section{Classical solutions: uniqueness}\label{sec.4}

\begin{proof}[Proof of \Cref{thm3}]	First, since $A_i^2\ge A_1 A_i \ge A_i$ for $i\ge 1$, the sequence $(A_i^2)_{i\ge 1}$ is a non-decreasing sequence of positive real numbers with $A_1^2\ge 1$ which satisfies~\eqref{aa} and we infer from~\eqref{a2b} and \Cref{thm2} that there is at least one classical solution $f$ to~\eqref{rbak} satisfying~\eqref{ma2b}.
	
As for uniqueness, we proceed along the lines of the proof of \cite[Proposition~5.1]{dCPS2012}, using the sequence $(A_i)_{i\ge 1}$ as a weight instead of $(i^\alpha)_{i\ge 1}$. Specifically, let $f$ and $g$ be two classical solutions to~\eqref{rbak} satisfying~\eqref{ma2b} and set $E=f-g$. Then, for $i\ge 1$, $E_i$ solves
\begin{equation*}
	\frac{\mathrm{d}E_i}{\mathrm{d}t} = \sum_{j=1}^\infty a_{i+j,j} \left( f_j E_{i+j} + g_{i+j} E_j \right) - \sum_{j=1}^\infty a_{i,j} \left( f_j E_i + g_i E_j \right),
\end{equation*} 
from which we deduce that
\begin{align*}
	\frac{\mathrm{d}}{\mathrm{d}t} \sum_{i=1}^\infty A_i |E_i| & = \sum_{i=1}^\infty  \sum_{j=1}^\infty A_i a_{i+j,j} \mathrm{sign}(E_i) \left( f_j E_{i+j} + g_{i+j} E_j \right) \\
	& \qquad - \sum_{i=1}^\infty \sum_{j=1}^\infty A_i a_{i,j} \mathrm{sign}(E_i) \left( f_j E_i + g_i E_j \right) \\
	& \le \sum_{i=1}^\infty \sum_{j=i+1}^\infty A_{j-i} a_{i,j} \left( f_i |E_{j}| + g_{j} |E_i| \right)  + \sum_{i=1}^\infty \sum_{j=1}^\infty A_i a_{i,j} g_i |E_j| \\
	& \qquad - \sum_{i=1}^\infty \sum_{j=1}^\infty A_i a_{i,j} f_j |E_i| \,.
\end{align*}
Using the symmetry~\eqref{a} of $(a_{i,j})_{i,j\ge 1}$ and the monotonicity of $(A_i)_{i\ge 1}$, we further obtain
\begin{align*}
	\frac{\mathrm{d}}{\mathrm{d}t} \sum_{i=1}^\infty A_i |E_i| & \le  \sum_{i=1}^\infty \sum_{j=i+1}^\infty A_{j} a_{i,j} f_i |E_{j}| + 2 \sum_{i=1}^\infty \sum_{j=1}^\infty A_j a_{i,j} g_j |E_i| - \sum_{i=1}^\infty \sum_{j=1}^\infty A_j a_{i,j} f_i |E_j|\\
	& \le 2 \sum_{i=1}^\infty \sum_{j=1}^\infty A_j a_{i,j} g_j |E_i|\,.
\end{align*}
We finally infer from~\eqref{aa}, \eqref{ma2b}, and the above inequality that
\begin{equation*}
	\frac{\mathrm{d}}{\mathrm{d}t} \sum_{i=1}^\infty A_i |E_i| \le 2 \sum_{i=1}^\infty \sum_{j=1}^\infty A_j^2 A_{i} g_j |E_i| \le 2 M_{A^2}(f^{in}) \sum_{i=1}^\infty A_i |E_i|\,, 
\end{equation*}
and Gronwall's lemma completes the proof.
\end{proof}

\begin{remark}
	Let $R>0$. It actually follows from \Cref{thm3} and its proof that, when the kinetic coefficients $(a_{i,j})_{i,j\ge 1}$ satisfy~\eqref{aa}, the system~\eqref{rbak} generates a dynamical system in the complete metric space $\{z = (z_i)_{i\ge 1}\in X_{1,+}\ :\ \|z\|_1+ M_{A^2}(z) \le R\}$ endowed with the metric induced by the norm $\|z\|_A := \sum_{i=1}^\infty A_i |z_i|$.
\end{remark}

\section*{Acknowledgments}

I thank Ankik Kumar Giri for pointing out the Redner-ben-Avraham-Kahng cluster system to me, as well as the referee for valuable remarks.

\noindent For the purpose of Open Access, a CC-BY public copyright licence has been applied by the author to the present document and will be applied to all subsequent versions up to the Author Accepted Manuscript arising from this submission.

\bibliographystyle{siam}
\bibliography{RBAK}

\end{document}